\documentclass[11pt]{article}
\usepackage{amsmath,amsthm,amsfonts,amssymb,amscd}
\numberwithin{equation}{section}
\usepackage{tikz}
\usetikzlibrary{decorations.markings}
\usetikzlibrary{arrows.meta,positioning,calc}
\usepackage[scr]{rsfso}
\usepackage{stackrel}
 \usepackage{amsmath}
\usepackage{enumerate} 
\usepackage{color}
\usepackage{xcolor}
\usepackage{float}
\usepackage{soul}
\usepackage{graphicx}
\usepackage{mathpazo}
\usepackage{fancyhdr}
\usepackage{empheq}
\usepackage{hyperref}
\hypersetup{
  colorlinks=true,
  linkcolor=red,
  citecolor=blue,
  urlcolor=blue
}
\usepackage[margin=1in]{geometry}

\def\XX{\mathbb{X}}
\def\YY{\mathbb{Y}}

\def\Bar{\overline}
\def\ra{\rangle}
\def\la{\langle}
\def\ve{\varepsilon}
\def\B{\mathbb{B}}
\def\h{\hfill\Box}
\def\R{\mathbb{R}}
\def\argmin{ \mathop{{\rm argmin}}}
\def\ox{\bar{x}}

\def\cone{\mbox{\rm cone}\,}

\def\Im{\mbox{\rm Im}\,}

\def\span{\mbox{\rm span}\,}

\def\dom{\mbox{\rm dom}\,}
\def\Ker{\mbox{\rm Ker}\,}

\def\supp{\mbox{\rm supp}\,}

\def\sign{\mbox{\rm sign}\,}

\def\cl*co{\mbox{\rm cl}^*\mbox{\rm co}\,}

\def\cl{\mbox{\rm cl}\,}

\def\h{\hfill\triangle}
\def\dn{\downarrow}
\def\O{\Omega}

\def\oR{\Bar{\R}}

\def\Lm{\Lambda}
\def\N{\mathbb{N}}

\def\hs7{\hspace*{7pt}}

\renewcommand{\theequation}{\thesection.\arabic{equation}}

\def\h{\hfill\Box}

\begin{document}

\newtheorem{Theorem}{Theorem}[section]
\newtheorem{Conjecture}[Theorem]{Conjecture}
\newtheorem{Proposition}[Theorem]{Proposition}
\newtheorem{Remark}[Theorem]{Remark}
\newtheorem{Lemma}[Theorem]{Lemma}
\newtheorem{Corollary}[Theorem]{Corollary}
\newtheorem{Definition}[Theorem]{Definition}
\newtheorem{Example}[Theorem]{Example}
\newtheorem{Fact}[Theorem]{Fact}
\newtheorem*{pf}{Proof}
\renewcommand{\theequation}{\thesection.\arabic{equation}}
\usetikzlibrary{decorations.markings}

\usetikzlibrary{decorations.markings}

\tikzset{
  crossed top/.style={
    postaction={
      decorate,
      decoration={
        markings,
        mark=at position 0.62 with {
          \draw[black, thick] (-2pt,6pt) -- (2pt,-6pt);
        }
      }
    }
  },
  crossed bottom/.style={
    postaction={
      decorate,
      decoration={
        markings,
        mark=at position 0.38 with {
          \draw[black, thick] (-2pt,6pt) -- (2pt,-6pt);
        }
      }
    }
  }
}
\normalsize
\normalfont
\medskip
\def\endproof{$\h$\vspace*{0.1in}}

\title{\bf Isolated calmness of regularized linear inverse problems}
\date{}

\author{
Tran T. A. Nghia\thanks{Department of Mathematics and Statistics, Oakland University, Rochester, MI 48309, USA. Emails: nttran@oakland.edu; hnpham@oakland.edu; nghiavo@oakland.edu.}
\and
Huy N. Pham\footnotemark[1]
\and
Nghia V. Vo\footnotemark[1]
\and
Khoa V. H. Vu\thanks{Department of Electrical and Computer Engineering, Oakland University, Rochester, MI 48309, USA. Email: khoavu@oakland.edu.}
}
\maketitle
\begin{center}
\emph{Dedicated to Professor Phan Quoc Khanh on the occasion of his 80th birthday}
\end{center}

\vspace{0.1in}

{\small \noindent {\bf Abstract.}
This paper studies the isolated calmness property of solution mappings arising from convex regularized linear inverse problems. We mainly  establish a tangent-cone characterization of this property. For convex piecewise linear-quadratic regularizers, the isolated calmness coincides with the solution uniqueness, leading to simple verification procedures. In particular, for analysis sparsity regularization, our condition can be verified by linear programming.}

\section{Introduction}
\label{sec:intro}

Recovering a signal $x_0$ in a Euclidean space $\XX$ from its observation $b_0=\Phi x_0$ in another Euclidean space $\YY$ over a linear operator $\Phi:\XX\to \YY$ is a fundamental linear inverse problem.
Solving the linear system of equations
\[
\Phi x = b_0
\]
may fail to recover the exact signal $x_0$ in general; especially in scenario where the system is ill-posed, leading to infinitely many solutions. To restore well-posedness, typical approaches incorporates prior low-complexity information, such as sparsity, group sparsity, bounded variation, or low rank, through adding a regularizer $R:\XX\to \R$ as an objective function and solving the optimization problem
\begin{equation}\label{prob:exact}
  \min_{x\in \XX}\quad  R(x)
  \quad \mbox{subject to} \quad
  \Phi x=b_0,
\end{equation}
is a successful technique widely adopted in retrieving $x_0$ \cite{CT05,DET05,CR09,FR13,GSH11,BB18,VPF15}. While the squared Euclidean norm for the regularizer leads to the classical minimum-norm solution and connects to Tikhonov--Morozov regularization \cite{T43,TA77,M93,EHN96}, many modern applications in compressed sensing, image processing, and machine learning, call for nonsmooth regularizers (e.g., the $\ell_1$ norm, the $\ell_1/\ell_2$ norm, total variation seminorms, and the nuclear norm) whose properties promote prior low-complexity structures; see, e.g.,   \cite{BDE09,BB18,CR09,CP11,FR13} for the influence of  this regularization technique in different areas.

In this paper, we study the parameterized regularized linear inverse problem
\begin{equation}\label{eq:parameterized_problem_intro}
P(b):\qquad
\min_{x\in \XX} \quad R(x)
\quad \mbox{subject to} \quad
\Phi x=b,
\end{equation}
where $b\in \YY$ is the only parameter. Its corresponding  solution mapping is 
\[
S(b):= \argmin\,\{R(x)|\, \Phi (x)=b \}.
\]
For a reference solution $x_0\in S(b_0)$, a first natural question is whether $x_0$ is the unique solution of $P(b_0)$, i.e., whether the noiseless constrained model achieves exact recovery; see, e.g., \cite{F05,Tr06,ZYC15,G17,ZYY16,FNT23,HKS23}. However, solution uniqueness itself does not quantify how the recovered solutions behave when the observed data $b$ is inexact. Our paper mainly studies the Lipschitz-type continuity of the solution mapping $S$ {\em at} $b_0$ {\em for} $x_0$, which is usually referred to as the \emph{isolated calmness} \cite[Section~3.9]{DR14} of $S$ at $b_0$ for $x_0$, in the sense that there exist a neighborhood $U$ of $x_0$ in $\XX$, a neighborhood $V$ of $b_0$ in $\YY$, and a constant $\kappa>0$ such that
\begin{equation}\label{eq:isolated_calm_intro}
S(b)\cap U
\subset
x_0+\kappa \|b-b_0\|\mathbb B_{\XX}
\quad \mbox{for all } b\in V,
\end{equation}
where $\mathbb B_{\XX}$ is the Euclidean unit ball in $\XX$. This property means that any optimal solution of $P(b)$ with small perturbation $b$ of $b_0$ in a neighborhood of $x_0$ can be approximated by $x_0$ with a linear rate $\|b-b_0\|$. It plays an important role in stability theory for optimization problems and convergence analysis of algorithms; see, e.g., \cite{DR14, I17}. The mapping $S$ does not need to be single-valued around $b_0$, but $S(b_0)$ must consist of only $x_0$. This notion is closely related to stable recovery theory \cite{DET05,G11,GSH11,FPV13,H13,NPV25} that considers a different parameterized unconstrained optimization problem
(aka the {\em Tikhonov regularization problem} \cite{TA77})
\begin{equation}\label{p:Lass}
 \min_{x\in \XX}\quad  \frac{1}{2}\|\Phi x-b\|^2+\mu R(x) 
\end{equation} 
with two parameters: the tuning or regularization parameter $\mu>0$  and the noise parameter $b$. Stable recovery means any solution $x_{\mu,b}$ of this problem can be estimated linearly by $x_0$ in the sense that 
\begin{equation}\label{eq:SC}
 \|x_{\mu,b}-x_0\|\le \mathcal{O}(\delta),
\end{equation}
when $\mu$ is proportional to $\delta$ and $\|b-b_0\|\le \delta$ with $\delta$ being the noise level. For many polyhedral regularizers, solution uniqueness is often sufficient for stable recovery \cite{GSH11,G17,ZYC15}, while beyond  polyhedral settings  solution uniqueness is not enough to guarantee stable recovery and the isolated calmness; see, e.g., \cite[Example~3.4]{NPV25} and our Example~\ref{ex:NoSR}. \cite[Theorem~3.3]{NPV25} shows that stable recovery occurs if and only if
\[
    \Ker \Phi\cap T_{\partial R^*({\rm Im}\, \Phi^*)}(x_0)=\{0\}, 
\]
where $T_{\partial R^*({\rm Im}\, \Phi^*)}(x_0)$ is the {\em tangent cone} to the (possibly non-convex) set $\partial R^*({\rm Im}\, \Phi^*)$ at $x_0$. In this paper, we prove that this condition also characterizes the isolated calmness property \eqref{eq:isolated_calm_intro}.
\vspace{0.1in}

\noindent {\bf Our contributions.}
\begin{itemize}
\item We characterize solution uniqueness for composite regularizers of the form $R(x)=h(Kx),$
where $K:\XX\to \mathbb{E}$ is a linear operator between two Euclidean spaces and $h: \mathbb{E}\to\mathbb R$ is a continuous convex function. 
Assuming that $x_0$ is an optimal solution of $P(b_0)$, we prove that $x_0$ is the unique solution of $P(b_0)$ if and only if
\begin{equation}\label{eq:radial_cone_uniqueness_intro}
\operatorname{Ker}\Phi
\cap
K^{-1}
\left(
\operatorname{cone}\big(\partial h^*(z)-Kx_0\big)
\right)
=
\{0\},
\end{equation}
for some $z \in \Lambda(x_0)$, the {\em pre-dual certificate set} defined later in \eqref{def:DC}. This condition extends recent geometric uniqueness characterization for convex optimization problems \cite{FNP25} for the composite setting.

\item We then establish a geometric characterization of isolated calmness for the solution mapping $S$. Under the standing continuity assumption on $h$ and a mild full-row-rank assumption on $\Phi$, imposed only to ensure the non-emptiness of $S(b)$, we prove that $S$ has the isolated calmness property at $b_0$ for $x_0$ if and only if
\begin{equation}
\operatorname{Ker}\Phi
\cap
T_{\partial R^*(\operatorname{Im}\Phi^*)}(x_0)
=
\{0\}.
\label{eq:tangent_cone_ic_intro}
\end{equation}
%Here $T_{\partial R^*(\operatorname{Im}\Phi^*)}(x_0)$ denotes the contingent cone to the set $\partial R^*(\operatorname{Im}\Phi^*)$ at $x_0$. 
This condition is consistent with the second-order geometric structure underlying the stable recovery theory developed in \cite{NPV25}.

%\item For {\em convex piecewise linear--quadratic regularizers}, we prove that isolated calmness of $S$ at $b_0$ for $x_0$ is equivalent to solution uniqueness. Consequently, the radial-cone condition in \eqref{eq:radial_cone_uniqueness_intro} becomes a practical certificate for isolated calmness. This class contains many regularizers used in optimization, statistics, and inverse problems, including piecewise-linear penalties such as the $\ell_1$ norm, anisotropic total variation, and fused Lasso, as well as piecewise linear--quadratic penalties such as the elastic net, Huber-type penalties, and Blake--Zisserman-type regularizers \cite{ZH05,H73,BZ87}.

\item We make the preceding criterion explicit for analysis $\ell_1$ regularization, $R(x)=\|Kx\|_1.$ In this case, after choosing a pre-dual certificate, the relevant radial cone can be described using the active index set of $Kx_0$ and the saturated index set of the certificate. This gives a finite-dimensional conic feasibility condition that can be checked via linear programming.

\item Finally, we show that the equivalence between local single-valuedness and isolated calmness fails beyond the piecewise linear--quadratic setting by constructing a nonpolyhedral example involving the $\ell_1/\ell_2$ norm for which the solution mapping is locally single-valued around the reference solution, but isolated calmness fails. %The counterexample shows that qualitative stability does not necessarily imply Lipschitz-type stability.

\end{itemize}

\noindent
\textbf{Outline of the paper.}
Section~\ref{sec:prelim} recalls the notation and basic tools from convex analysis used throughout the paper. Section~\ref{sec:main} studies the solution mapping associated with the regularized inverse problem, including the radial cone characterization of solution uniqueness and the tangent cone characterization of isolated calmness. We then discuss consequences for convex piecewise linear--quadratic regularizers, specialize the criteria to analysis $\ell_1$ regularization, and present a non-polyhedral example separating local single-valuedness from isolated calmness of the solution mapping.

\section{Preliminaries}\label{sec:prelim}

In this paper, we denote by $\XX$ a Euclidean space endowed with the inner product $\la u,v\ra$ for $u,v \in \XX$ and $$\|x\|:=\sqrt{\langle x,x\rangle}\quad \mbox{for}\quad x\in \XX$$ is the corresponding Euclidean norm. The notations $\B_{r}(\ox)$ and $\B$, respectively, denote the closed ball with center $\ox\in\XX$ and radius $r>0$ and the closed unit ball centered at the origin in $\XX.$ For a linear operator $\Phi:\XX\to\YY$ between two Euclidean spaces, we use $\Im \Phi\subset \YY$ and $\Ker \Phi\subset \XX$ to denote its {\em range} and {\em kernel space}. Denote by $\Phi^*:\YY\to \XX$ the {\em adjoint} operator of $\Phi$. Let $\R_+$ and $\R_{++}$ be the sets of nonnegative and strictly positive real numbers, respectively.

Let us present several notions and classical results from convex analysis that will be used throughout the paper; see, e.g., \cite{R70}. Let $\Omega$ be a closed convex set in $\XX$ and $\ox \in \Omega$. We denote $\cone \Omega$ for the \emph{conic hull} of $\Omega$, ${\rm aff}\, \Omega$ for the $\emph{affine hull}$ of $\Omega$, and $\span \Omega$ for the $\emph{span}$ of $\Omega$ . The \emph{relative interior} of $\Omega$ is defined by
\[
{\rm ri}\, \Omega :=\{x\in \Omega|\;\exists\, \ve>0, \B_\ve(x)\cap  {\rm aff }\, \Omega \subset \Omega\}.
\]
The \emph{cone of feasible directions} or \emph{radial cone} to $\Omega$ at $\ox\in \Omega$ is defined by
\begin{equation}\label{RT}
    \mathcal{R}_\Omega(\ox) :={\rm cone}\,(\Omega-\ox)=\R_+(\Omega-\ox).
\end{equation}
For a nonempty set $\Omega\subset \XX$, the \emph{asymptotic cone} or \emph{recession cone} is defined by
\begin{equation}\label{AC}
\Omega^\infty
:=
\left\{
x\in \XX |\,
\exists\, \{x_k\} \subset \Omega,\ \lambda_k \small\downarrow0,\ 
\text{with } \lambda_k x_k \to x \text{ as } k\to \infty
\right\}.
\end{equation}
Given a nonempty, convex cone $C\subset \XX$, its {\em polar cone} is defined as 
\[
C^{\circ}:= \{v\in \XX|\, \la v,c\ra \leq 0 \text{ for all } c \in C\}.
\]

\begin{Definition}[Tangent/Contingent cone]\label{def:tan}
Let $\O$ be a closed (possibly non-convex) subset of $\XX$. The tangent/contingent cone to $\O$ at a point $\bar{x}\in \O$ is defined by
\begin{equation}\label{eq:tan}
        T_{\O}(\bar{x}):=\{w\in \XX|\, \exists\, t_k\downarrow 0, w_k\to w: \bar{x}+t_kw_k\in \O \quad \forall\, k\in \N\}.
    \end{equation}
\end{Definition}

Let $\varphi:\XX\to \oR :=\R\cup\{\infty\}$ be a lower semicontinuous (l.s.c.) convex function with nonempty domain
\[
\dom\varphi:=\{x\in\XX\mid \varphi(x)<\infty\}.
\]
For any $\ox\in\dom\varphi$, the (Fenchel) {\em convex subdifferential} of $\varphi$ at $\ox$ is defined as
\begin{equation}\label{defi:conv-subd}
\partial\varphi(\ox):=\{v\in\XX\mid \varphi(x)-\varphi(\ox)\ge \la v,x-\ox\ra\quad \forall\, x\in\XX\}.
\end{equation}
Associated with $\varphi$, its (Legendre--Fenchel) {\em conjugate} is the l.s.c. convex function $\varphi^*:\XX\to\oR$ given by
\[
\varphi^*(v):=\sup\{\la v,x\ra-\varphi(x)\mid x\in\XX\}
\quad\mbox{for}\quad v\in\XX.
\]
Throughout this paper, we will frequently make use of the fundamental relation
\begin{equation}\label{eq:Fenchel-identity}
v\in \partial \varphi (x) \,\,\Longleftrightarrow\,\, x\in \partial \varphi^* (v).
\end{equation}

A fundamental tool in the analysis of constrained optimization problems is the {\em indicator function} $\delta_\Omega(\cdot)$, associated with a nonempty closed convex set $\Omega\subset\XX$, which is defined to be $0$ when $x\in\Omega$ and $\infty$ otherwise. The subdifferential of the indicator function $\delta_\Omega$ at a point $\ox\in\Omega$ coincides with the {\em normal cone} to $\Omega$ at $\ox$, given by
\begin{equation}\label{def:Nor}
N_{\Omega}(\ox):=\big\{v\in \XX\mid \la v,x-\ox\ra\le 0 \quad\forall\, x\in \Omega\big\}.
\end{equation}

\section{Solution uniqueness and isolated calmness of regularized linear inverse problem}\label{sec:main}

In this section, we consider the following regularized linear inverse problem
\begin{equation}\label{p:P}
 P(b):\qquad    \min_{x\in \XX}\quad R(x)=h(Kx)\quad \mbox{subject to}\quad \Phi x=b,
\end{equation}
where $h:\mathbb{E}\to \R$ is a nonnegative continuous convex function, $K:\XX\to \mathbb{E}$ and $\Phi:\XX\to \YY$ are linear operators between Euclidean spaces, and $b\in \YY$. Suppose that $x_0$ is an optimal solution of $P(b_0)$ with $b_0:=\Phi x_0$. %Let us introduce the standard assumption for the function $h$ that is used throughout this paper.

%\noindent {\bf Assumption}:
%\begin{itemize}
  %  \item[\bf (A)] The function $h$ is continuous.$$ around $x_0$. %or  polyhedral in the sense that $\epi h$ is a convex polyhedral.
%    \item[\bf(A2)] $\Ker R_\infty\cap\Ker \Phi=\{0\}$, where $R_\infty$ is known as the {\em asymptotic/horizon function} defined by 
%\begin{equation}\label{AF}
 %   R_\infty(w):=\liminf_{w^\prime\to w, t\to \infty}\dfrac{R(tw^\prime)}{t}
%\end{equation}
%and $\Ker R_\infty:=\{w\in \R^n|\; R_\infty(w)=0\}$. 
%\end{itemize}

Define the {\em solution mapping} as
\begin{equation}\label{eq:sol_map}
    S(b):=  \argmin\,\{R(x)|\, \Phi (x)=b \}\quad \text{for}\quad b\in \YY.
\end{equation}
For each $b\in \YY$, note that the set $S(b)$ is closed and convex.
%For any $x_\delta \in F(b)$ and $x_\delta \to x$, since $\Phi$ is a linear operator, $x\in F(b)$. Hence, $F(b)$ is closed. On the other hand, we claim that $F(b)$ is bounded. Indeed, suppose that $F(b)$ is unbounded, there exists sequence $\{x_\delta\} \subset F(b)$ such that $\|x_\delta\| \to \infty$. Without loss of generality, suppose that $\frac{x_\delta}{\|x_\delta\|} \to w$ and $\|
%w\|=1$, which implies that $w \in \Ker \Phi$. Moreover,
%\[
%0=\liminf_{\delta\dn 0}\dfrac{R(x_0)}{\|x_\delta\|}\ge \liminf_{\delta\dn 0}\dfrac{R(x_\delta)}{\|x_\delta\|}\ge R_\infty(w)\ge 0, 
%\]
The set of {\em pre-dual certificates} at $x_0$ is defined by 
\begin{equation}\label{def:DC}
    \Lm(x_0):=\{z\in \mathbb{E}|\, z\in \partial h(Kx_0)\, \text{ and }\, K^*z\in \Im \Phi^*\}. 
\end{equation}

Optimality of problem \eqref{p:P} at $x_0\in \XX$ can be expressed via $\Lambda(x_0)$, as established next.

\begin{Proposition}[Source  condition]\label{prop:SC} A feasible point $x_0$ is an optimal solution of $P(b_0)$ if and only if $\Lm(x_0)\neq \emptyset$.   
\end{Proposition}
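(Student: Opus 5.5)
The plan is to write $P(b_0)$ as the unconstrained convex problem $\min_x \, h(Kx)+\delta_{\Phi^{-1}(b_0)}(x)$ and apply Fermat's rule. Since $P(b_0)$ is convex, $x_0$ is optimal if and only if
\[
0\in\partial\big(h\circ K+\delta_{\{x\,|\,\Phi x=b_0\}}\big)(x_0).
\]
The argument then reduces to computing this subdifferential exactly via two standard calculus rules.

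First, because $h$ is finite and continuous on all of $\mathbb{E}$, the chain rule for convex functions gives
\[
\partial (h\circ K)(x_0)=K^*\partial h(Kx_0).
\]
This holds with equality because the qualification condition $\Im K\cap {\rm int}\,\dom h\neq\emptyset$ is trivially satisfied when $\dom h=\mathbb{E}$.

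Second, the normal cone to the affine set $\{x\,|\,\Phi x=b_0\}$ at $x_0$ is
\[
(\Ker\Phi)^\perp=\Im\Phi^*,
\]
and this uses only finite dimensionality. Next, apply the Moreau--Rockafellar sum rule. It holds with equality because $h\circ K$ is continuous everywhere, so its domain meets the constraint set in the interior. This yields
\[
\partial\big(h\circ K+\delta_{\Phi^{-1}(b_0)}\big)(x_0)=K^*\partial h(Kx_0)+\Im\Phi^*.
\]

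Hence $x_0$ is optimal if and only if there exist $z\in\partial h(Kx_0)$ and $y\in\YY$ with $K^*z+\Phi^*y=0$. Equivalently, $K^*z=\Phi^*(-y)\in\Im\Phi^*$, which says exactly that $z\in\Lambda(x_0)$. Both directions are immediate from this chain of equivalences.

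There is no real obstacle here. The only point needing care is justifying equality, and not just inclusion, in the chain and sum rules; continuity of $h$ on all of $\mathbb{E}$ handles this. If one prefers to avoid the chain rule altogether, the sufficiency direction can be checked directly. Given $z\in\Lambda(x_0)$ with $K^*z=\Phi^*y$, every feasible $x$ satisfies
\[
h(Kx)-h(Kx_0)\ge\langle z,K(x-x_0)\rangle=\langle y,\Phi(x-x_0)\rangle=0.
\]
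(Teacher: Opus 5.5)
Your proposal is correct and follows essentially the same route as the paper: Fermat's rule for $R+\delta_{\Phi^{-1}(b_0)}$, the sum and chain rules holding with equality by continuity of $h$, and $N_{\Phi^{-1}(b_0)}(x_0)=\Im\Phi^*$, giving $0\in K^*\partial h(Kx_0)+\Im\Phi^*$, i.e., $\Lm(x_0)\neq\emptyset$. Your extra direct check of sufficiency is a nice touch, but it is not needed.
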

\begin{proof}
Under the global continuity of $h$, observe that 
\[
\partial (R+\delta_{\Phi^{-1}(b_0)})(x_0)= \partial R(x_0) + \partial \delta_{\Phi^{-1}(b_0)}(x_0) =\partial R(x_0)+ N_{\Phi^{-1}(b_0)}(x_0)= K^*\partial h(Kx_0)+\Im \Phi^*.
\]
Hence $x_0$ is an optimal solution of problem $P(b_0)$ if and only if $0\in \partial (R+\delta_{\Phi^{-1}(b_0)})(x_0)$, which means $\Lm(x_0)\neq \emptyset$.  
\end{proof}

% \begin{Remark}
% When $x_0$ is the unique solution of $P(b_0),$ {\rm \cite[Remark~3.2]{FNT23}} helps us to derive the following condition:
% \begin{equation}\label{AC}
%     \Ker R_\infty\cap \Ker \Phi=\{0\},
% \end{equation}
% \end{Remark}

Let us establish the first main result of the paper that provides a characterization for solution uniqueness of problem \eqref{p:P} in terms of initial data and explicit functions $h$ and $K$, which extends \cite[Theorem~4.1]{FNP25} to the case of composite functions.

\begin{Theorem}[Characterization of solution uniqueness] \label{thm:Uniq} Suppose that $x_0\in \XX$ is an optimal solution of problem $P(b_0)$. Then the following are equivalent: 
\begin{itemize}
    \item[{\bf(i)}] $x_0$ is the unique solution of $P(b_0)$.
    \item[{\bf(ii)}] For any $z\in \Lm(x_0)$, 
    \begin{equation}\label{con:Rad}
        \Ker \Phi \cap K^{-1}\left(\cone(\partial h^*(z)-Kx_0)\right)=\{0\}.
    \end{equation}
    \item[{\bf(iii)}] For any $z\in \Lm(x_0)$, 
    \begin{equation}\label{con:original}
        \Ker \Phi \cap K^{-1}\left(\partial h^*(z)-Kx_0\right)=\{0\}.
    \end{equation}
    \item[{\bf(iv)}] There exists some $z\in \Lm(x_0)$ such that condition~\eqref{con:Rad} is satisfied.
    \item[{\bf(v)}] There exists some $z\in \Lm(x_0)$ such that condition~\eqref{con:original} is satisfied.
\end{itemize}
\end{Theorem}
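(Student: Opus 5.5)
The key observation is that for any fixed $z\in\Lm(x_0)$ the solution set can be described exactly through $z$:
\[
S(b_0)=\{x\in\XX\mid \Phi x=b_0,\ Kx\in\partial h^*(z)\}.
\]
Proving this identity is the first step, and it is where the real content lies. Fix $z\in\Lm(x_0)$ and write $K^*z=\Phi^*y$.

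For the inclusion ``$\supset$'', take $x$ with $\Phi x=b_0$ and $Kx\in\partial h^*(z)$. By \eqref{eq:Fenchel-identity} we get $z\in\partial h(Kx)$, so $K^*z\in\partial R(x)$. Since $K^*z\in\Im\Phi^*$, this gives $\Lm(x)\ne\emptyset$ at the feasible point $x$, and Proposition~\ref{prop:SC} shows $x$ is optimal.

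For the inclusion ``$\subset$'', take $x\in S(b_0)$, so $R(x)=R(x_0)$. Then
\[
h(Kx)\ge h(Kx_0)+\la z,K(x-x_0)\ra = h(Kx_0)+\la y,\Phi(x-x_0)\ra=h(Kx_0).
\]
Equality holds throughout because the two objective values coincide. Hence $h(Kx)=h(Kx_0)+\la z,Kx-Kx_0\ra$, and adding the subgradient inequality at $Kx_0$ shows $z\in\partial h(Kx)$. Equivalently, $Kx\in\partial h^*(z)$. A more direct route is via the Fenchel equality $h(Kx)+h^*(z)=\la z,Kx\ra$, which follows from $\la z,Kx\ra=\la z,Kx_0\ra$ together with $h(Kx)=h(Kx_0)$.

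With this identity in hand, $S(b_0)-x_0=\Ker\Phi\cap K^{-1}(\partial h^*(z)-Kx_0)$. Since $x_0$ is a solution, this set always contains $0$. Consequently (i)$\Leftrightarrow$(iii), and (i)$\Leftrightarrow$(v) for the given $z$. The identity holds for every $z\in\Lm(x_0)$, so (i)$\Rightarrow$(iii)$\Rightarrow$(v)$\Rightarrow$(i); for (v)$\Rightarrow$(iii), (v) gives (i), and (i) gives (iii).

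For the cone versions, let $C:=\partial h^*(z)-Kx_0$, a convex set containing $0$. I claim
\[
\Ker\Phi\cap K^{-1}(\cone C)=\cone\big(\Ker\Phi\cap K^{-1}(C)\big).
\]
For the nontrivial inclusion, suppose $w\in\Ker\Phi$ and $Kw=t c$ with $t>0$, $c\in C$. Then $w/t\in\Ker\Phi$ and $K(w/t)=c\in C$. The case $t=0$ forces $Kw=0$, and $0\in C$, so $w$ itself lies in $\Ker\Phi\cap K^{-1}(C)$. It follows that \eqref{con:Rad} and \eqref{con:original} are equivalent for each $z$, which gives (ii)$\Leftrightarrow$(iii) and (iv)$\Leftrightarrow$(v).

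The main obstacle is the inclusion ``$\subset$'' in the set identity: every other solution must share the same certificate $z$. The argument relies on $K^*z\in\Im\Phi^*$, which makes $\la z,K(x-x_0)\ra$ vanish on $\Ker\Phi$. Everything else is routine bookkeeping.
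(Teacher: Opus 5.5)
Your proof is correct. It rests on the same two facts the paper uses. First, a feasible $x$ with $Kx\in\partial h^*(z)$ is optimal, because $z\in\Lm(x)$. Second, an optimal $x$ satisfies $Kx\in\partial h^*(z)$, because $K^*z\in\Im\Phi^*$ and $x-x_0\in\Ker\Phi$ give $\la z,K(x-x_0)\ra=0$, which forces the Fenchel equality.

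What differs is the packaging. The paper proves (i)$\Rightarrow$(ii) directly: it rescales $w\in\Ker\Phi$ with $Kw\in\cone(\partial h^*(z)-Kx_0)$ so that $K(x_0+tw)\in\partial h^*(z)$, treats $Kw=0$ as a separate case, and then invokes uniqueness. It proves (v)$\Rightarrow$(i) by the Fenchel equality and closes the cycle with trivial implications.

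You instead merge the two facts into the exact identity $S(b_0)-x_0=\Ker\Phi\cap K^{-1}(\partial h^*(z)-Kx_0)$, valid for every $z\in\Lm(x_0)$. You then isolate the rescaling step in a purely convex-geometric lemma: conic hulls commute with $\Ker\Phi\cap K^{-1}(\cdot)$ on a convex set containing $0$.

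Your packaging gives a more transparent statement. The whole solution set is cut out by any single certificate, which explains at once why the ``for any $z$'' and ``for some $z$'' versions agree. It also shows that (ii)$\Leftrightarrow$(iii) and (iv)$\Leftrightarrow$(v) hold certificate by certificate, independently of uniqueness. The paper's route is more direct, but it leaves that structural fact implicit.

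In both arguments, the step (iii)$\Rightarrow$(v) tacitly uses $\Lm(x_0)\neq\emptyset$, which Proposition~\ref{prop:SC} supplies.
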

\begin{proof}
Since [{\bf(ii)}$\Rightarrow${\bf(iii)}$\Rightarrow${\bf (v)}] and [{\bf(ii)}$\Rightarrow${\bf(iv)}$\Rightarrow${\bf (v)}] are trivial, it suffices to verify [{\bf(i)}$\Rightarrow${\bf(ii)}] and [{\bf(v)}$\Rightarrow${\bf(i)}]. Let us start to prove [{\bf(i)}$\Rightarrow${\bf(ii)}] by supposing that $x_0$ is the unique solution of $P(b_0)$. Fix $z\in \Lambda (x_0)$. Pick any $w\in \Ker \Phi \cap K^{-1}\left(\cone(\partial h^*(z)-Kx_0)\right)$, which implies that 
$$Kw\in \cone(\partial h^*(z)-Kx_0).$$ 
If $Kw=0$, then by the optimality of $x_0$ to $P(b_0)$,
\begin{equation*}
    0\in K^* \partial h\big(Kx_0\big)+ \Im \Phi^* = K^* \partial h\big(K(x_0+w)\big)+ \Im \Phi^*.
\end{equation*}
This results in $\Lambda (x_0+w)\neq \varnothing$, and thus $$x_0+w \in S\big(\Phi (x_0+w)\big) = S(b_0) = \{x_0\},$$ which yields $w=0$. If $Kw\neq 0$, we find some $t>0$ such that $K(x_0+tw)\in \partial h^*(z)$. It follows that  $z\in \partial h(K(x_0+tw))$. As $K^*z\in \Im \Phi^*$, we obtain $z\in\Lm(x_0+tw)$. Since $\Phi(x_0+tw)=\Phi x_0=b_0$, $x_0+tw$ is also an optimal solution of $P(b_0)$ by Proposition~\ref{prop:SC}. This asserts that $w=0$. Hence [{\bf (i)}$\Rightarrow${\bf (ii)}] is verified.

It remains to justify  [{\bf(v)}$\Rightarrow${\bf(i)}]. Suppose that condition~\eqref{con:original} is satisfied at some $z\in \Lm(x_0)$. By contradiction, let $x_1$ be another optimal solution of $P(b_0)$ that is different from $x_0$ and set $w := x_1 - x_0 \ne 0$. Since both $x_0$ and $x_1$ are optimal solutions, we have $R(x_1)=R(x_0)$ and $\Phi x_1=\Phi x_0=b_0$. Thus, one gets $\Phi w = \Phi x_1 - \Phi x_0 = 0$, and therefore $w\in \Ker \Phi$. Moreover, because $z\in \Lm(x_0)$, we have $\la Kw, z\ra = \la w, K^*z\ra=0$ as $w\in \Ker \Phi$ and $K^*z\in \Im \Phi^*$. Therefore,
\begin{equation}
h\big(K(x_0+w)\big)=h(K x_1) = h(Kx_0)=\la Kx_0,z \ra-h^*(z)=\la K(x_0+w),z\ra-h^*(z),
\end{equation}
which implies that $K(x_0+w)\in \partial h^* (z)$ and 
\[
Kw = K(x_0 + w) - Kx_0 \in \partial h^*(z) - Kx_0.
\]
Hence, we derive that $w\in K^{-1}\left(\partial h^*(z)-Kx_0\right) \cap \Ker \Phi = \{ 0\}$, which is a contradiction. In conclusion, $x_0$ is the unique solution of $P(b_0)$.
\end{proof}

\begin{Remark} {\rm 
When $\mathbb{E} = \XX$ and $K=\mathbb{I}_{\XX}$ is the identity operator, the equivalence between {\bf (i)}, {\bf (ii)}, and {\bf (iv)} was obtained in \cite[Theorem~4.1]{FNP25}. The equivalence between  {\bf (i)}, {\bf (iii)}, and {\bf (v)} here looks simpler. But condition \eqref{con:Rad} can be more verifiable in practice, especially when the conic hull there is a closed set, which happens when $h$ is a {\em polyhedral function} (a.k.a. convex {\em piecewise linear} function); see \cite[Remark~3.1]{FNP25} for further discussions. In the next example, we illustrate how the geometry condition \eqref{con:Rad} is explicitly interpreted, in the case $h$ is the $\ell_1$-norm \eqref{defi:l1}. 
}
\end{Remark}

\begin{Example}[Solution uniqueness when $h=\|\cdot\|_1$]\label{re:l1}\rm When $\mathbb{E}=\R^m$, consider the $\ell_1$ norm 
\begin{equation}\label{defi:l1}
h(u)=\|u\|_1 :=\sum_{i=1}^m|u_i|
\end{equation}
and the composite function $R(x)=h(Kx)=\|Kx\|_1$, where $K$ is an $m\times n$ matrix. Denote the index set $I:=\supp(Kx_0) = \{i \in \{1,\ldots,m\} |\, (Kx_0)_i \ne 0 \}.$ Note that $h^*=\delta_{\mathbb{B}_\infty}$ with 
\[
\B_\infty:=\{z\in \R^m|\, \|z\|_\infty\le 1\}\quad \mbox{and}\quad \|z\|_\infty:=\max\{|z_i|\,|\, i=1,\ldots,m\}\quad \mbox{for}\quad z\in \R^m.
\]
Observe that 
$$
\begin{aligned}
\Lambda(x_0)
&= \{z\in \R^m |\, z\in \partial \|\cdot\|_1 (Kx_0),\, K^* z \in \Im \Phi^*\}\\
&=
\big\{
z\in \mathbb{R}^m \big|\,
z_I=\sign(Kx_0)_I,\ \|z_{I^c}\|_\infty\le 1,\ K^*z\in \Im \Phi^*
\big\}.
\end{aligned}
$$
Fix any $z\in \Lambda(x_0)$ and define $J_z:=\{i\in\{1,\ldots,m\}|\, |z_i|=1\},$
which implies that $I\subset J_z$. It follows that
\begin{equation*}
    Kx_0 \in \partial h^* (z) = \partial \delta_{\mathbb{B}_{\infty}}(z)=N_{\mathbb{B}_{\infty}}(z) = \big(\R_+ z_i\big)_{i\in J_z}\times \{0\}_{J_z^c}.
\end{equation*}
Hence, 
$$
\cone\big(\partial h^*(z)-Kx_0\big) = \cone\Big(\big[(\R_+ z_i)_{i\in J_z}\times \{0\}_{J_z^c}\big]-Kx_0\Big)
\subset \R^{I}\times \big(\R_+ z_i\big)_{i\in J_z\setminus I}\times \{0\}_{J_z^c}.
$$
The reverse inclusion also holds true. Indeed, pick any $v$ from the right-most set. Since $z_i=\sign(Kx_0)_i$ for any $i\in I$, we can find $t>0$ sufficiently small so that $$tv_i+(Kx_0)_i\in \R_+z_i \,\,\, \text{for every}\,\,\, i\in I.$$ 
% and find $t>0$ sufficiently small so that 
% \begin{equation*}
%     t|v_i| < \dfrac{|(Kx_0)_i|}{2} \quad \forall\, i\in I.
% \end{equation*}
% Under this choice, a direct computation shows that $tv_i + (Kx_0)_i \in \R_+ z_i$ for each $i\in I$. 
This results in the inclusion $v\in \cone\big([(\R_+ z_i)_{i\in J_z}\times \{0\}_{J_z^c}]-Kx_0\big)$, which yields 
\begin{equation}\label{cone:l1}
    \cone\big(\partial h^*(z)-Kx_0\big) =\R^{I}\times \big(\R_+ z_i\big)_{i\in J_z\setminus I}\times \{0\}_{J_z^c}.
\end{equation}
Hence, condition~\eqref{con:Rad} becomes
\[
\Ker \Phi \cap K^{-1}\left(\R^{I}\times \big(\R_+ z_i\big)_{i\in J_z\setminus I}\times \{0\}_{J_z^c}\right)
= \{0\},
\] which can be equivalently written as
\begin{equation}\label{cond:unique1}
\Ker \Phi \cap
\Big\{
w\in \R^n \Big|\,
Kw\in \R^{I}\times \big(\R_+ z_i\big)_{i\in J_z\setminus I}\times \{0\}_{J_z^c}
\Big\}
=
\{0\}.
\end{equation}
Therefore, solution uniqueness can be checked by searching for a pre-dual certificate $z\in\Lambda(x_0)$
for which the above condition holds. In Remark~\ref{re:calm-l1}, we show that this condition is explicitly verifiable via solving Linear Programming (LP) optimization problems.
\end{Example}

Let us recall the central notion of this paper, the isolated calmness property; see, e.g., \cite[Section~3.9]{DR14}.
\begin{Definition}[Isolated calmness property]\label{defi:IC}
Let $x_0$ be an optimal solution of $P(b_0)$.
The solution mapping $S(\cdot)$ in \eqref{eq:sol_map} is said to satisfy the {\em isolated calmness property} at $b_0$ for $x_0$ if the assertion
\begin{equation}\label{defi:iso-calm}
    S(b)\cap U \subset \{x_0\} + \kappa \|b-b_0\| \mathbb{B}_{\XX} \qquad \forall\, b\in \mathbb{B}_{\delta}(b_0)
\end{equation}
holds for some neighborhood $U$ of $x_0$ and some parameters $\kappa,\delta>0$. 
\end{Definition}

In the presence of the isolated calmness property of $S$ at $b_0$ for $x_0$, the solution set $S(b_0)$ reduces to the singleton $x_0$ due to its convexity; that is, $x_0$ is the unique solution of problem \eqref{p:P}. Moreover, the restriction to the neighborhood $U$ in \eqref{defi:iso-calm} can be removed due to the {\em inner semi-continuity} \cite[Section~3.2]{DR14} of the solution mapping, which will be established next.

\begin{Proposition}[Inner semi-continuity of the solution mapping $S(\cdot)$]\label{converge}
Let the linear operator $\Phi:\XX\to\YY$ have full row rank. Assume that $x_0$ is the unique solution to $P(b_0)$, and let $\{\delta_k\}$ be a sequence satisfying $\delta_k\to 0$. For each $k\in\mathbb{N}$, choose $b_k\in\mathbb{B}_{\delta_k}(b_0)$ and suppose that $x_k\in S(b_k)$. Then the sequence $\{x_k\}$ converges to $x_0$ as $k\to\infty$.

Consequently, $S$ is {\em inner semi-continuous} at $b_0$ (for $x_0$) in the sense that for every neighborhood $U$ of $x_0$, there exists $\delta>0$ such that for each $b\in \mathbb{B}_{\delta}(b_0)$ and $x\in S(b)$, we have $x\in U$.
\end{Proposition}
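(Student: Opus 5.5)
Since $\Phi$ has full row rank, $\Phi\Phi^*$ is invertible. For each $k$ we can therefore choose a feasible point $y_k:=x_0+\Phi^*(\Phi\Phi^*)^{-1}(b_k-b_0)$. It satisfies $\Phi y_k=b_k$ and $\|y_k-x_0\|\le c\|b_k-b_0\|\le c\delta_k\to 0$, with $c:=\|\Phi^*(\Phi\Phi^*)^{-1}\|$. In particular $S(b_k)$ is evaluated on a nonempty feasible set; the statement already supposes $x_k\in S(b_k)$. By optimality of $x_k$ and continuity of $R=h\circ K$,
\[
R(x_k)\le R(y_k)\to R(x_0).
\]
So $\limsup R(x_k)\le R(x_0)$, and the whole argument reduces to showing that $\{x_k\}$ is bounded.

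\textbf{Boundedness (main obstacle).} Suppose $\|x_k\|\to\infty$ along a subsequence, and pass to a further subsequence with $w_k:=(x_k-x_0)/\|x_k-x_0\|\to w$, $\|w\|=1$. Then $\Phi w=\lim (b_k-b_0)/\|x_k-x_0\|=0$, so $w\in\Ker\Phi$. Fix $t>0$. For $k$ large we have $t_k:=t/\|x_k-x_0\|\in(0,1)$, and by convexity
\[
R(x_0+t w_k)\le (1-t_k)R(x_0)+t_kR(x_k).
\]
The right-hand side tends to $R(x_0)$ because $R(x_k)$ is bounded above, and $R\ge 0$ gives boundedness below as well. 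Continuity of $R$ then yields $R(x_0+tw)\le R(x_0)$ while $\Phi(x_0+tw)=b_0$. Hence $x_0+tw\in S(b_0)=\{x_0\}$, contradicting $w\neq 0$. Thus $\{x_k\}$ is bounded. This is the step needing care: everything rests on the upper bound on $R(x_k)$ coming from the feasible points $y_k$, which is exactly where full row rank is used, and on the convexity trick to pass the bound to the recession direction.

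\textbf{Convergence.} Let $\bar x$ be any cluster point of the bounded sequence $\{x_k\}$. Then $\Phi\bar x=\lim b_k=b_0$, and by continuity $R(\bar x)=\lim R(x_{k_j})\le R(x_0)$. So $\bar x\in S(b_0)=\{x_0\}$. Since every cluster point equals $x_0$ and the sequence is bounded, $x_k\to x_0$.

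\textbf{Inner semicontinuity.} Suppose the consequence fails. Then there are a neighborhood $U$ of $x_0$, $\delta_k:=1/k$, $b_k\in\B_{\delta_k}(b_0)$ and $x_k\in S(b_k)\setminus U$. The first part forces $x_k\to x_0$, so $x_k\in U$ for large $k$, which is a contradiction.
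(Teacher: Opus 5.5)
Your proof is correct, but it takes a different and more elementary route than the paper.

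\textbf{How the paper argues.} It gets $R(x_k)\le R(x_0)+L_1\delta_k$ from Fenchel duality. Full row rank makes the dual value function $v(b)=\max_y\{\langle y,b\rangle-R^*(\Phi^*y)\}$ finite and convex on all of $\YY$, hence locally Lipschitz, so the dual solutions $y_k\in\partial v(b_k)$ are bounded. For boundedness of $\{x_k\}$, it shows that a limiting direction $w$ of $x_k/\|x_k\|$ lies in $\Ker\Phi\cap\Ker R_\infty$. It then invokes the external fact \cite[Remark~3.2]{FNT23} that uniqueness makes this intersection trivial. Finally, it identifies cluster points through optimality conditions: it bounds subgradients in $\partial R(x_{k_m})\cap\Im\Phi^*$ by local Lipschitz continuity and passes to the limit.

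\textbf{How you argue.} You replace each ingredient by an elementary one. The explicit feasible point $y_k$ plus continuity of $R$ gives $\limsup R(x_k)\le R(x_0)$. The convex-combination inequality $R(x_0+tw_k)\le(1-t_k)R(x_0)+t_kR(x_k)$ proves the needed recession-direction fact inline. Cluster points are handled simply by comparing objective values.

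\textbf{What your route buys.} It is self-contained: no duality, no subdifferential calculus, no cited lemma. It also uses less:
\begin{itemize}
\item Only the upper bound on $R(x_k)$ enters your convexity step, so nonnegativity of $R$ is not needed.
\item Since $b_k=\Phi x_k\in\Im\Phi$ automatically whenever $x_k\in S(b_k)$, you could take $y_k:=x_0+\Phi^\dagger(b_k-b_0)$ with the Moore--Penrose pseudo-inverse. That would let you drop full row rank in the sequential statement.
\end{itemize}

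\textbf{What the paper's route buys.} It is heavier, but it places the argument inside the dual/certificate framework used throughout the paper. It also ties boundedness to the known asymptotic condition $\Ker\Phi\cap\Ker R_\infty=\{0\}$. Its Lipschitz estimate on optimal values is not exclusive to duality: your $y_k$ recovers it via local Lipschitz continuity of $R$. The proposition itself only needs your qualitative bound.
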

\begin{proof}
We consider the (Fenchel) dual problem of $P(\cdot)$ via the function $v: \YY \to \oR$ as follows:
\begin{equation}\label{p:pri}
    v(b):=\,\sup_{y\in \YY}\,\{\la y,b\ra- R^*(\Phi^*y)\}\qquad (b\in \YY).
\end{equation}
By Fenchel's Duality Theorem \cite[Corollary~31.2.1]{R70}, it follows that
\begin{equation}\label{DP}  
    \inf_{\{x\,|\,\Phi x=b\}} \, R(x) = \max_{y\in \YY}\,\{\la y,b\ra- R^*(\Phi^*y)\} = v(b) \qquad \forall\, b\in \Phi\big(\mathrm{ri}\, (\dom R)\big).
\end{equation}
Actually, we have $\Phi\big(\mathrm{ri}\, (\dom R)\big) = \YY$, since $\Phi$ is full row rank and $R(\cdot)$ is a continuous function. Consequently, $v(\cdot)$ is convex and has full domain in $\YY$. In particular, it is locally Lipschitz around $b_0$ with some modulus $L_1>0$. For each $k\in \N$, the relationship in \eqref{DP} lends us some $y_k\in \YY$ such that 
\begin{equation}\label{eq:vbk}
    v(b_k)=\la y_k,b_k\ra- R^*(\Phi^*y_k).
\end{equation}
It then follows from the max rule of convex subdifferential calculus that $y_k\in \partial v(b_k)$. Consequently, we have $\|y_k\|\le L_1$ for large enough $k\in \N$. As $x_k$ solves $P(b_k)$, it follows from \eqref{DP}--\eqref{eq:vbk} that
\begin{eqnarray}\label{eq:bound-Rxk}
\begin{aligned}
            R(x_k)=v(b_k)=\la y_k,b_k\ra- R^*(\Phi^*y_k)&=\la y_k,b_k-b_0\ra+ \la y_k,b_0 \ra - R^*(\Phi^*y_k) \\
            &\le \|y_k\|\|b_k-b_0\|+v(b_0)\\
            &\le L_1\delta_k+R(x_0).
\end{aligned}
\end{eqnarray}
% \textcolor{blue}{2. Primal:} Since $\Phi$ is full row rank, it admits a linear right inverse $\Psi$, i.e., $\Phi \circ\Psi = I_{\YY}$. Define $h_k := \Psi (b_k-b_0)$, we have $h_k \to 0$ as $k\to \infty$ and 
% \begin{equation}\label{eq:Rxkle}
%     \Phi (x_0 + h_k) = \Phi (x_0) + (\Phi \circ \Psi)(b_k-b_0) = b_0 + b_k - b_0 = b_k.
% \end{equation}
% As $x_k$ solves $P(b_k)$, it follows that $$R (x_k) \le R (x_0+h_k).$$
% Since $h_k\to 0$, we consider $k\in \N$ large enough so that $x_0 + h_k$ belongs to a neighborhood of $x_0$ on which $R$ is locally Lipschitz with some modulus $L_0>0$. Consequently, by \eqref{eq:Rxkle},
% \begin{equation*}
%     R(x_k) \le R(x_0+h_k) \le R(x_0) + L_0 \|h_k\| \le R(x_0) + L_0 \|\Psi\|\|b_k-b_0\|\le R(x_0) + L_0 \|\Psi\| \delta_k.
% \end{equation*}
% \\
We claim that the sequence $\{x_k\}$ is bounded when $\delta_k \to 0$. By contradiction suppose that $\{x_k\}$ is unbounded, and without loss of generality assume that $\|x_k\|\to \infty$ and $w_k:=\frac{x_k}{\|x_k\|}\to w$ with $\|w\|=1$. Moreover, since $\Phi (x_k) = b_k$, by dividing both sides by $\|x_k\|$ and passing to the limits as $\delta_k\to 0$, we have $w\in \Ker \Phi$.

Recall from \cite{AT03} the {\em asymptotic function} $R_\infty:\XX \to \oR$ associated with $R$, defined by
\begin{equation}\label{AF}
    R_\infty(w):=\liminf_{w^\prime\to w,\, t\to \infty}\dfrac{R(tw^\prime)}{t},
\end{equation}
and let $\Ker R_\infty:=\{w\in \XX|\; R_\infty(w)=0\}$. The relations in \eqref{eq:bound-Rxk} and \eqref{AF} and the nonnegativity of $R(\cdot)$ together deduce the estimates
\[
0=\liminf_{k\to \infty}\dfrac{L_1 \delta_k + R(x_0)}{\|x_k\|}\ge \liminf_{k\to \infty}\dfrac{R(x_k)}{\|x_k\|}\ge R_{\infty}(w)\ge 0, 
\]
and thus one obtains $w\in \Ker R_\infty$. Hence $w\in \Ker \Phi \cap \Ker R_{\infty}$ and $\|w\|=1$. This contradicts \cite[Remark~3.2]{FNT23} as $x_0$ is the unique solution of $P(b_0)$. Thus $\{x_k\}$ is bounded,  and there exists a subsequence $\{x_{k_m}\}$ of $\{x_k\}$ that converges to some vector $u_0$ as $m\to \infty$. By passing to the limits, we have $\Phi u_0=b_0$. Moreover, optimality of the problems $P(b_{k_m})$ $(m\in \N)$ lends us a sequence $\{v_{k_m}\}$ satisfying 
\begin{equation}\label{eq:vkm}
    v_{k_m}\in \partial R(x_{k_m})\cap {\rm Im}\, \Phi^*.
\end{equation}
Suppose that $R(\cdot)$ is locally Lipschitz around $u_0$ with some modulus $L_2>0$. This implies that $\|v_{k_m}\|\le L_2$. Hence, without passing to subsequences suppose that $v_{k_m} \to v$. Consequently, we get from \eqref{eq:vkm} the inclusion 
$$
    v\in \partial R(u_0)\cap {\rm Im}\,\Phi^*.
$$ 
Thus, $u_0$ is an optimal solution to $P(b_0)$, and by uniqueness, there holds $u_0=x_0$. Since the subsequence $\{x_{k_m}\}$ was taken arbitrarily, we eventually have $x_k \to x_0$ as $k\to \infty$. 

The second assertion follows immediately from the first by contradiction.
\end{proof}

\begin{Remark}\label{rem:iso-calm}
\rm 
%\begin{enumerate}
By Proposition~\ref{converge}, the neighborhood $U$ in the definition of isolated calmness \eqref{defi:iso-calm} can be omitted. More precisely, $S$ has the isolated calmness at $b_0$ for $x_0$ if and only if there exist $\kappa,\delta>0$ for which the assertion
    \begin{equation}\label{charac:iso-calm}
        S(b) \subset \{x_0\} + \kappa \|b-b_0\|\mathbb{B}_{\XX}\qquad \forall\, b\in \mathbb{B}_{\delta}(b_0)
    \end{equation}
    holds.
    %\item[{\bf (ii)}] It is immediate to show that $S$ has the isolated calmness property \eqref{charac:iso-calm} at $b_0$ for $x_0\in S(b_0)$ if and only if there exist positive constants $K,\delta_0>0$ such that for every $\delta\in (0,\delta_0)$ and $b\in  \mathbb{B}_{\delta}(b_0)$, any solution $x_b\in S(b)$ satisfies \begin{equation*}
    %\|x_b-x_0\|\le K\delta.
%\end{equation*} 
%This is also known as the ``{\em linear rate of convergence}'', which has some roots in the definition of {\em stable recovery}; see, e.g., \cite[Definition~3.1]{NPV25}.
%\end{enumerate}
\end{Remark}

Next, we establish the main result of this paper, that provides a full characterization of isolated calmness for the solution mapping $S(\cdot)$ via the tangent cone. 

\begin{Theorem}[Geometric characterization of isolated calmness]\label{thm1} Suppose that $\Phi$ is full row rank and that $x_0$ is an optimal solution of $P(b_0)$. Then the solution mapping $S(\cdot)$ has the isolated calmness property at $b_0$ for $x_0$ if and only if
\begin{equation}\label{con:isolated-calm}
    \Ker \Phi \cap T_{\partial R^*({\rm Im}\,\Phi^*)}(x_0)=\{0\}.
\end{equation}
\end{Theorem}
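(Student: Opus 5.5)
The proof rests on one description of the graph of $S$. For every $b\in\YY$,
\[
S(b)=\Phi^{-1}(b)\cap \partial R^*(\Im\Phi^*).
\]
Indeed, by the optimality rule in the proof of Proposition~\ref{prop:SC} (with $R$ in place of $h\circ K$), a point $x$ with $\Phi x=b$ solves $P(b)$ if and only if some $v\in\partial R(x)\cap\Im\Phi^*$ exists. By \eqref{eq:Fenchel-identity} this is the same as $x\in\partial R^*(v)$ for some $v\in\Im\Phi^*$. Write $\Omega:=\partial R^*(\Im\Phi^*)=\bigcup_{b}S(b)$. The set $\Omega$ is closed: it is the projection onto $\XX$ of the closed graph of $\partial R$ intersected with $\XX\times\Im\Phi^*$, and since $R$ is finite and continuous, $\partial R$ is locally bounded, so this projection is closed. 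Hence $T_\Omega(x_0)$ is the usual contingent cone. With this identification both directions become statements about sequences in $\Omega$ approaching $x_0$.

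\emph{Necessity.} Assume isolated calmness in the global form \eqref{charac:iso-calm} given by Remark~\ref{rem:iso-calm}. Take $w\in\Ker\Phi\cap T_\Omega(x_0)$, so there are $t_k\downarrow0$ and $w_k\to w$ with $x_k:=x_0+t_kw_k\in\Omega$. Set $b_k:=\Phi x_k$. Then $x_k\in S(b_k)$ and $\|b_k-b_0\|=t_k\|\Phi w_k\|=t_k\|\Phi(w_k-w)\|=o(t_k)$, so $b_k\in\B_\delta(b_0)$ for large $k$. Isolated calmness gives $t_k\|w_k\|\le\kappa\, t_k\|\Phi(w_k-w)\|$. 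Dividing by $t_k$ and letting $k\to\infty$ yields $\|w\|\le0$, so $w=0$.

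\emph{Sufficiency.} Assume \eqref{con:isolated-calm} and suppose isolated calmness fails. First, \eqref{con:isolated-calm} already forces uniqueness of $x_0$. If $x_1\in S(b_0)$ with $x_1\neq x_0$, then by convexity of $S(b_0)$ the whole segment $[x_0,x_1]$ lies in $\Omega$. Hence $x_1-x_0\in\Ker\Phi\cap T_\Omega(x_0)$ is nonzero, a contradiction. So Proposition~\ref{converge} and Remark~\ref{rem:iso-calm} apply. Failure of \eqref{charac:iso-calm} then gives $b_k\to b_0$ and $x_k\in S(b_k)$ with $\|x_k-x_0\|>k\|b_k-b_0\|$. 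By Proposition~\ref{converge}, $x_k\to x_0$, and $x_k\neq x_0$. Set $t_k:=\|x_k-x_0\|\downarrow0$ and $w_k:=(x_k-x_0)/t_k$. Passing to a subsequence, $w_k\to w$ with $\|w\|=1$, and $w\in T_\Omega(x_0)$ because $x_0+t_kw_k=x_k\in\Omega$. Moreover $\|\Phi w_k\|=\|b_k-b_0\|/t_k<1/k$, so $\Phi w=0$. This contradicts \eqref{con:isolated-calm}.

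\emph{Main obstacle.} The delicate point is reducing to the global form \eqref{charac:iso-calm}. This needs inner semicontinuity, which is Proposition~\ref{converge}, and that proposition in turn needs uniqueness of $x_0$, which I obtain from the tangent condition through convexity of $S(b_0)$. The other place needing care is the closedness of $\Omega$ used for the tangent cone. If closedness were doubtful, the argument would still go through: membership $x_0+t_kw_k\in\Omega$ is all that is used, and the definition of $T_\Omega(x_0)$ does not require $\Omega$ to be closed. The remaining steps are routine normalization and compactness arguments.
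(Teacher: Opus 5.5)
Your proof is correct and follows the paper's argument in both directions. Necessity uses the same sequence $x_k=x_0+t_kw_k$ with $b_k=\Phi x_k$, and sufficiency uses the same normalization $(x_k-x_0)/\|x_k-x_0\|$ together with Proposition~\ref{converge}. The differences are minor: you get uniqueness of $x_0$ directly from the segment $[x_0,x_1]\subset S(b_0)\subset\partial R^*(\Im\Phi^*)$ rather than through the radial cone of $\partial R^*(v)$ and \cite[Theorem~4.1]{FNP25}, and you verify closedness of $\partial R^*(\Im\Phi^*)$, which the paper leaves implicit.
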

\begin{proof}
Assume that isolated calmness of $S(\cdot)$ occurs at $b_0$ for $x_0$ with respect to positive constants $\kappa,\delta>0$. Take any vector $w\in \Ker \Phi \cap T_{\partial R^*({\rm Im}\,\Phi^*)}(x_0).$ By the definition of the tangent cone \eqref{eq:tan}, there exist sequences $t_k\downarrow 0$ and $w_k \to w$ such that 
$$
    x_k:=x_0+t_kw_k\in \partial R^*({\rm Im}\, \Phi^*).
$$ 
Hence, for each $k$, there exists some $v_k\in {\rm Im}\,\Phi^*$ such that $v_k\in \partial R(x_k)$. Let us define
        \begin{equation}\label{def:del,b}
        \delta_k:=\|\Phi(x_k-x_0)\|=t_k\|\Phi w_k\|\to 0 \quad \mbox{and} \quad b_k:=\Phi x_k \to b_0.
        \end{equation}
        It follows that
        \begin{equation}\label{con:conb}
            \|b_k-b_0\|=\|\Phi(x_k-x_0)\|=\delta_k \to 0 \text{ as }k\to \infty.
        \end{equation}
        Furthermore, since $v_k\in  \partial R(x_k)\cap {\rm Im}\, \Phi^*$ we have $$0\in \partial R(x_k) + {\rm Im}\, \Phi^*= \partial \big(R+\delta_{\Phi^{-1}(b_k)}\big)(x_k).$$ Combining with $\Phi x_k=b_k$, it follows that $x_k$ is an optimal solution of $P(b_k)$. The convergence in \eqref{con:conb} and the isolated calmness of $S(\cdot)$ at $b_0$ for $x_0$ (see Definition~\ref{defi:IC}) together yield
\[t_k\|w_k\|=\|x_k-x_0\|\le \kappa\delta_k = t_k \kappa\|\Phi w_k\|\]
for every $k\in \N$ large enough. This relationship leads to $\|w_k\|\le \kappa\|\Phi w_k\|$. As $w_k\to w \in \Ker \Phi$, we get from the latter that $\|w\|\le 0$, i.e., $w=0$, which verifies condition \eqref{con:isolated-calm}.\\

Conversely, let the condition \eqref{con:isolated-calm} hold. We suppose, toward a contradiction, that $S(\cdot)$ does not have the isolated calmness property at $b_0$ for $x_0$. Hence, there exist some sequences $\delta_k \downarrow 0$, $b_k \to b_0$ and $\{x_k\}\subset \XX$ such that 
\begin{equation}\label{eq:bkxk}
    \|b_k-b_0\|\le \delta_k \quad \text{and}\quad \|x_k-x_0\|\ge k\delta_k,
\end{equation}
where $x_k$ is an optimal solution of $P(b_k)$ for each $k\in \N$. Since $x_0$ solves $P(b_0)$, one finds some $v\in \partial R(x_0)\cap {\rm Im}\,\Phi^*$. This yields $\partial R^*(v) \subset \partial R^*( {\rm Im}\, \Phi^*)$, which results in
\[\{0\}=\Ker \Phi \cap T_{\partial R^*({\rm Im}\,\Phi^*)}(x_0) \supset \Ker \Phi \cap\left(\cone(\partial R^*(v)-x_0)\right).\] 
It follows from \cite[Theorem~4.1]{FNP25} that $x_0$ is the unique solution of $P(b_0)$. As $x_k$ solves $P(b_k)$, Proposition~\ref{converge} tells us that $\{x_k\}$ converges to $x_0$ as $k\to \infty$.
We define $t_k:=\|x_k-x_0\|\ge k\delta_k >0$ and $q_k:=t_k^{-1}(x_k-x_0)$. By taking subsequences if needed, suppose without loss of generality that $t_k\dn 0$ and $q_k\to q\in \XX$ with  $\|q\|=1$. Pick $v_k\in \partial R(x_k)\cap \Im \Phi^*$. The relationships 
\begin{equation*}
    q_k t_k + x_0 = x_k \in \partial R^* (v_k) \subset \partial R^* (\mathrm{Im}\, \Phi^*)
\end{equation*}
clearly yield $q\in T_{\partial R^* (\mathrm{Im}\, \Phi^*)}(x_0)$. Moreover, it follows from \eqref{eq:bkxk} that
\[
\|\Phi^*(\Phi q_k)\|\le\|\Phi^*\|\dfrac{\|\Phi(x_k-x_0)\|}{t_k}=\|\Phi^*\|\dfrac{\|b_k-b_0\|}{t_k}\le\dfrac{\|\Phi^*\|}{k}\to 0 \ \text{ as }\ k\to \infty,
\]
which yields $\Phi^*(\Phi q)=0$, i.e., $\Phi q=0$. Hence, we have 
\[
q\in \Ker \Phi\cap T_{\partial R^*({\rm Im}\,\, \Phi^*)}(x_0). 
\]
This contradicts  \eqref{con:isolated-calm}, as $\|q\|=1$. The proof is complete.
\end{proof}

\begin{Remark}\rm Condition~\eqref{con:isolated-calm} was discovered in \cite[Theorem~3.3]{NPV25} to characterize the so-called {\em stable recovery} via {\em Tikhonov regularization problem}~\eqref{p:Lass} in the sense  of \eqref{eq:SC}. This somewhat proves that the stable recovery at $x_0$ of the linear inverse problem~\eqref{p:P} is equivalent to the isolated calmness of the optimal solution mapping $S$ in \eqref{eq:sol_map} at $b_0$ for $x_0$. Unlike \cite[Theorem~3.3]{NPV25}, we need to suppose additionally that $\Phi$ has full row rank, which simply avoids the case that the optimal set $S(b)$ is empty at some $b$ around $b_0$.
\end{Remark}

\begin{Remark}[Verification of isolated calmness in continuous convex piecewise linear-quadratic cases]\label{rem:calm-cpwl}
\rm  Let us work under the assumptions of Theorem~\ref{thm1}, and additionally assume that $R(\cdot)$ is a continuous convex and {\em piecewise linear--quadratic function} \cite[Definition~10.20]{Rockafellar98}, i.e., $\dom R$ is the
union of finitely many polyhedral sets and, on each of these sets, \(R\)
coincides with a quadratic function of the form
\[
x\mapsto \frac{1}{2}\langle Ax,x\rangle+\langle b,x\rangle+c,
\]
for some positive semidefinite operator from $\XX$ to itself, vector \(b\in\mathbb X\), and
scalar \(c\in\mathbb R\).

By Theorem~\ref{thm1}, and the arguments in the proof of \cite[Corollary~3.8]{NPV25}, the isolated calmness property of the solution mapping $S(\cdot)$ at $b_0$ for $x_0$ is equivalent to the uniqueness of the solution to $P(b_0)$ at $x_0$. Invoking Theorem~\ref{thm:Uniq}, this condition can be proved or disproved by simply taking any $z\in \Lambda(x_0)$ and checking  \eqref{con:Rad}. 
Clearly, verifying condition~\eqref{con:Rad} is more straightforward than checking~\eqref{con:isolated-calm}; see \cite{NPV25} for further discussions on checking~\eqref{con:isolated-calm} in different cases. Further, in Remark~\ref{re:calm-l1} below, we provide a concrete illustration of how the isolated calmness property can be justified via \eqref{con:Rad} in the case $R(x):=\|Kx\|_1$.
\end{Remark}

\begin{Remark}[Verification of isolated calmness property in the case $R(x)=\|Kx\|_1$]
\label{re:calm-l1}\rm Consider problem \eqref{p:P}, in which $(\XX,\mathbb{E}):= (\R^n,\R^m)$ and $h:=\|\cdot\|_1$ is the $\ell_1$-norm \eqref{defi:l1} on $\R^m$. This composite function $R$ is a piecewise linear function. Fix $x_0\in \R^n$ at which the source condition \begin{equation*}
    \emptyset \neq \Lambda (x_0) := \{z\in \R^m|\, z\in \partial\|\cdot\|_1 (Kx_0) \text{ and } K^* z \in \Im \Phi^*\}
\end{equation*}
holds, i.e., $x_0$ yields the feasibility of the LP problem:
\begin{equation}\label{LP_z}
\begin{aligned}
    \min_{z,u} \quad &0 \\
    \text{s.t.}\quad &\Phi^*u = K^*z \\
    &z_I=\sign(Kx_0)_I \\ 
    &\mathbf{-1} \le z_{I^c} \le \mathbf{1}.
\end{aligned}
\end{equation}
We pick any pre-dual certificate $z\in \Lambda (x_0)$, which can be done by solving \eqref{LP_z}.
Subsequently, by Remark~\ref{rem:calm-cpwl} and equation \eqref{cond:unique1}, verifying the isolated calmness property of $S(\cdot)$ at $b_0 = \Phi (x_0)$ for $x_0$ reduces to justifying the identity 
\begin{equation*}
    \Ker \Phi \cap
    \Big\{
    w\in \R^n \Big|\,
    Kw\in \R^{I}\times \big(\R_+ z_i\big)_{i\in J_z\setminus I}\times \{0\}_{J_z^c}
    \Big\}
    =
    \{0\},
\end{equation*}
in which $I:=\{i|\, (Kx_0)_i \neq 0\}$ and $J_z := \{i|\, |z_i|=1\}$. This amounts to checking whether
\begin{equation*}
\not\exists\, w\neq 0 \text{ such that }
\begin{cases}
    \Phi w = 0 \\
    K_{J_z^c} w = 0 \\
    (z_i K_i)_{i\in J_z \setminus I} w \ge 0,
\end{cases}
\end{equation*}
or equivalently in matrix form,
\begin{equation}\label{check:matrix}
\not\exists\, w\neq 0 \text{ such that }
\begin{cases}
    Aw=0\\
    Bw\ge 0 ,
\end{cases}
\end{equation}
where $A:=\begin{pmatrix}
    \Phi \\
    K_{J_z^c}
\end{pmatrix}$, $B:= (z_i K_i)_{i\in J_z \setminus I}$, and $K_i$ stands for the $i$-th row of $K$. 

When $J_z \setminus I = \emptyset$, verifying \eqref{check:matrix} is equivalent to checking $\Ker A = \{0\}$. We show next that when $J_z \setminus I \ne \emptyset$, one can verify \eqref{check:matrix} via linear programming. By employing Gordan's Lemma, we show that
\begin{equation}\label{check:split}\eqref{check:matrix}\quad  \Longleftrightarrow \quad  \Ker \begin{pmatrix}
        A \\ B
    \end{pmatrix} = \{0\} \,\text{ and }\, \exists\, \mu>0,\exists\, \lambda : A^* \lambda + B^* \mu =0. 
\end{equation}
Indeed, we verify ``$\Longleftarrow$" by supposing, toward a contradiction, that $\Ker \begin{pmatrix}
        A \\ B
\end{pmatrix} = \{0\} \text{ and } \exists\, \mu>0,\exists \lambda : A^* \lambda + B^* \mu =0$, and there exists $w\neq 0$ satisfying $Aw=0,Bw\ge 0$. Then 
\begin{equation*}
\begin{aligned}
    0 = \la A^*\lambda + B^* \mu, w \ra = \la \lambda , Aw\ra + \la \mu, Bw\ra = \la \mu, Bw\ra \ge 0,
\end{aligned}
\end{equation*}
which yields $Bw=0$. Having $Aw=Bw=0$ and $w\neq 0$ clearly contradict $\Ker \begin{pmatrix}
        A \\ B
\end{pmatrix} = \{0\}$. Thus the implication ``$\Longleftarrow$" is verified.

Conversely, suppose \eqref{check:matrix} is true. Clearly, we have $\Ker \begin{pmatrix}
    A \\ B
\end{pmatrix} = \{0\}$ in this case. Let $N$ be a matrix whose columns form a basis of $\Ker A$. Then every solution of $Aw=0$ can be written uniquely as $w=Nu$ for some vector $u$. Setting $C:=BN$, we have the relation
\begin{equation*}
     Aw=0,Bw\ge 0\,\, \Longleftrightarrow w=Nu \,\, \text{and} \,\, C u\ge 0.
\end{equation*}
Further, because the columns of $N$ are linearly independent, we have $w=0$ if and only if $u=0$. Consequently, \eqref{check:matrix} implies that $\not\exists\, u\neq 0$ for which $Cu\ge 0$. In other words,
\begin{equation*}
    \Im C \cap \R^p_+ = \{0\},\quad \text{ where }\, p:=|J_z \setminus I|\, \text{ is the cardinality of }\, J_z \setminus I.
\end{equation*}
Let $D$ be a matrix such that $\Im D = \Ker C^*$, or equivalently, $\Ker D^* = \Im C$. We have the relations
\begin{equation*}
\begin{aligned}
    \Im C \cap \R^p_+ = \{0\} \Longrightarrow \Ker D^* \cap  \R^p_+ = \{0\} &\Longrightarrow \not\exists\, q\ge 0,q\neq 0: D^* q =0 \\
    &\Longrightarrow \exists\, z: Dz>0 \\
    &\Longrightarrow \Ker C^* \cap \R^p_{++} = \Im D \cap \R^p_{++} \neq \varnothing \\
    &\Longrightarrow \exists\, \mu >0: N^* B^* \mu = C^* \mu =0,
\end{aligned}
\end{equation*}
where the third one is due to Gordan's Lemma \cite{Gordan1873}. This lends us some $\mu>0$ satisfying 
\begin{equation*}
    B^* \mu \in \Ker N^* = (\Im N)^\perp = (\Ker A)^\perp = \Im A^*,
\end{equation*}
which eventually verifies the implication ``$\Longrightarrow$", and further verifies the equivalence in \eqref{check:split}.\\

Based on the above analysis, verifying the isolated calmness property can now be reformulated into justifying the following two conditions, namely 
\[
\Ker \begin{pmatrix}
    A\\ B
\end{pmatrix} = \{0\}
\quad \text{and} \quad \exists\, \mu >0,\exists\,\lambda: A^* \lambda + B^* \mu =0.
\]
% $\Ker \begin{pmatrix}
%     A\\ B
% \end{pmatrix} = \{0\}$ and 
% \begin{equation}\label{check:A*B*}
%     \exists \mu >0,\exists\lambda: A^* \lambda + B^* \mu =0.
% \end{equation}
The former can be checked by ensuring the number of positive singular values is the same as number of columns of the matrix  $\begin{pmatrix}
    A\\ B
\end{pmatrix}
$ via the singular value decomposition (SVD). Following this, justifying the latter could be done by checking feasibility of solving the following LP problem:
\begin{equation}\label{LP-test}
\begin{aligned}
    \min_{\lambda,\mu} \quad &\mu_1 \\
    \text{s.t.}\quad &A^*\lambda + B^* \mu = 0 \\
    &\mu \ge \mathbf{1}.
\end{aligned}
\end{equation}
\end{Remark}

Next, we examine how condition \eqref{con:Rad} can be reformulated for an important subclass of convex piecewise linear-quadratic functions, namely the class of \emph{piecewise linear-quadratic penalties} \cite[Example~11.18]{Rockafellar98}.

\begin{Definition}[Piecewise linear-quadratic penalty]
Suppose that $B\in \mathbb R^{m\times m}$ is a symmetric positive semidefinite matrix and $\mathcal P\subseteq \mathbb R^m$ is a nonempty polyhedral set. The piecewise linear-quadratic penalty generated by $\mathcal P$ and $B$, denoted by $\theta_{\mathcal P,B}$, is given by
\begin{equation}\label{eq:theta}
    \theta_{\mathcal P,B}(y):=\sup_{z\in\mathcal P}\left\{\la y,z \ra -\frac12 
    \la Bz,z\ra \right\},\qquad y\in \R^m.
\end{equation}
\end{Definition}

This function is proper, lower semicontinuous, and convex. Its Fenchel conjugate takes the form
\begin{equation}\label{eq:theta*}
    \theta_{\mathcal P,B}^*(z)=\delta_{\mathcal P}(z)+\frac12 \la 
    Bz,z\ra\quad \mbox{for every} \quad z\in\mathbb R^m.
\end{equation}
Suppose that the polyhedral set $\mathcal{P}$ is defined by 
\[
\mathcal{P}=\big\{y\in \R^m|\, \langle a_i,y\rangle\le c_i,\, i=1,\ldots,p\big\}
\]
with some $(a_i,c_i)\in \R^m \times \R$, $i=1,\ldots, p$ and $I(z):= \{i\in \{1,\ldots,p\}|\, \la a_i,z\rangle = c_i\rangle\}$. The following result provides an explicit form  for \eqref{con:Rad}.

\begin{Corollary}[Solution uniqueness and isolated calmness property when $h=\theta_{\mathcal{P},B}$]\rm 
In the case $h=\theta_{\mathcal{P},B}$ and $K$ is an $m\times n$ matrix, condition \eqref{con:Rad} becomes
\begin{equation}\label{cond:plqp}
    \Ker \Phi \cap K^{-1}\Big(\cone \big\{a_i|\, i\in I(z)\big\} + \R_+\big(Bz - Kx_0\big)\Big) = \{0\}\quad \mbox{for any }\quad z\in \Lm(x_0) .
\end{equation}
Under the constraint qualification condition

\begin{equation}\label{eq:cont_qual}
    \mathcal{P}^{\infty} \cap \Ker B = \{0\},
\end{equation}
the function $h$ is continuous. In this case, $S$ has the isolated calmness property at $b_0$ for $x_0$ if and only if \eqref{cond:plqp} holds for some, equivalently for every, $z\in \Lambda(x_0)$. 
\end{Corollary}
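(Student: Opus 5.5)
Three things need to be shown: (a) the cone in \eqref{con:Rad} takes the explicit form in \eqref{cond:plqp}; (b) \eqref{eq:cont_qual} makes $h$ continuous; (c) the isolated calmness equivalence follows from Remark~\ref{rem:calm-cpwl} and Theorem~\ref{thm:Uniq}.

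\textbf{Step (a): computing $\partial h^*(z)$.} By \eqref{eq:theta*}, $h^*=\delta_{\mathcal P}+\frac12\la B\cdot,\cdot\ra$. The quadratic part is finite everywhere, so the sum rule applies without qualification. Polyhedrality of $\mathcal P$ gives the normal cone formula $N_{\mathcal P}(z)=\cone\{a_i\mid i\in I(z)\}$. Hence
\[
\partial h^*(z)=N_{\mathcal P}(z)+Bz=\cone\{a_i\mid i\in I(z)\}+Bz .
\]
For $z\in\Lm(x_0)$, \eqref{eq:Fenchel-identity} gives $Kx_0\in\partial h^*(z)$. So $Kx_0=Bz+n_0$ for some $n_0\in N:=\cone\{a_i\mid i\in I(z)\}$, and therefore $\partial h^*(z)-Kx_0=N-n_0$.

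The remaining identity is the cone computation
\[
\cone(N-n_0)=N+\R_+(-n_0)=N+\R_+(Bz-Kx_0).
\]
\emph{Inclusion $\subset$:} $t(n-n_0)=tn+t(-n_0)$ with $tn\in N$ and $t\ge0$.
\emph{Inclusion $\supset$:} take $n+s(-n_0)$ with $n\in N$, $s\ge0$, and write it as $t\big((n/t+n_0)-n_0\big)$ with $t:=s+1$. Here $(n/t+n_0)-n_0$ is rewritten as $(n/t+(1-s/t)n_0)-n_0$. Since $1-s/t=1/t>0$, the point $n/t+(1-s/t)n_0$ lies in $N$, so the element lies in $\cone(N-n_0)$.

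Substituting into \eqref{con:Rad} yields \eqref{cond:plqp}. The ``for any $z$'' and ``for some $z$'' versions are equivalent by Theorem~\ref{thm:Uniq} (ii)$\Leftrightarrow$(iv).

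\textbf{Step (b): continuity of $h$.} This is the main obstacle. I would show that $\dom\theta_{\mathcal P,B}=\R^m$, since a finite convex function is continuous. Rockafellar--Wets \cite[Example~11.18]{Rockafellar98} gives that $\dom\theta_{\mathcal P,B}=(\mathcal P^\infty\cap\Ker B)^{\circ}$; under \eqref{eq:cont_qual} this is $\{0\}^\circ=\R^m$. A direct argument also works. If $\theta(y)=\infty$, there would be $z_k\in\mathcal P$ with $\|z_k\|\to\infty$ and $\la y,z_k\ra-\frac12\la Bz_k,z_k\ra\to\infty$. Normalizing $z_k/\|z_k\|\to d$ gives $d\in\mathcal P^\infty$. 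Moreover $\la Bz_k,z_k\ra\le\|y\|\|z_k\|$, and dividing by $\|z_k\|^2$ gives $\la Bd,d\ra=0$, so $Bd=0$ because $B$ is positive semidefinite. Then $0\ne d\in\mathcal P^\infty\cap\Ker B$, contradicting \eqref{eq:cont_qual}. Note that $h\ge0$ is a standing assumption on $h$; I would accept it as given, or drop it, since it is only used in Proposition~\ref{converge}.

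\textbf{Step (c): isolated calmness.} Now $h$ is a continuous convex piecewise linear-quadratic function, because the conjugate of a piecewise linear-quadratic function is piecewise linear-quadratic \cite[Theorem~11.14]{Rockafellar98}. Hence $R=h\circ K$ is also continuous convex piecewise linear-quadratic. By Remark~\ref{rem:calm-cpwl}, isolated calmness of $S$ at $b_0$ for $x_0$ is equivalent to uniqueness of $x_0$ as a solution of $P(b_0)$. By Theorem~\ref{thm:Uniq}, uniqueness is equivalent to \eqref{con:Rad}, hence to \eqref{cond:plqp}, for some, equivalently every, $z\in\Lm(x_0)$.
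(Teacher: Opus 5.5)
Your proposal is correct and follows the paper's proof essentially step by step. It uses the same formula $\partial h^*(z)=N_{\mathcal P}(z)+Bz$, the same cone identity, the same appeal to \cite[Example~11.18]{Rockafellar98} for $\dom h=\R^m$, and the same reduction via Remark~\ref{rem:calm-cpwl} and Theorem~\ref{thm:Uniq}. In the cone identity, your choice $t=s+1$ simply merges the paper's two cases $t>0$ and $t=0$. One fix there: your intermediate expression $t\big((n/t+n_0)-n_0\big)$ equals $n$, so state directly that $n-sn_0=t\big(\tfrac{n+n_0}{t}-n_0\big)$ with $\tfrac{n+n_0}{t}\in N$. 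Your direct recession argument for finiteness of $h$, and your explicit check via \cite[Theorem~11.14]{Rockafellar98} that $R=h\circ K$ is piecewise linear--quadratic, are useful additions that the paper leaves implicit.
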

\begin{proof}

Let us start to prove that 
condition~\eqref{cond:plqp} is equivalent to
\eqref{con:Rad}. Indeed, using \eqref{eq:theta*}, we obtain
\begin{equation}\label{eq:parh*}
    \partial h^* (z) = N_{\mathcal{P}}(z) + Bz,\qquad \forall\, z\in \R^m. 
\end{equation}
Suppose that $x_0$ solves $P(b_0)$. Pick some pre-dual certificate $z$ satisfying $z\in \partial h(Kx_0)$ and $K^* z \in \Im \Phi^*$. By the relations in \eqref{eq:Fenchel-identity} and \eqref{eq:parh*}, we have $Kx_0 \in \partial h^* (z) = N_{\mathcal{P}}(z) + Bz$, which yields 
\begin{equation}\label{eq:absorp}
Kx_0 - Bz \in N_{\mathcal{P}}(z).
\end{equation}
Since $\partial h^* (z) - Kx_0 = N_{\mathcal{P}}(z) + (Bz - Kx_0)$, it follows that 
\begin{equation*}
    \cone \big(\partial h^* (z) - Kx_0\big) = \cone \big(N_{\mathcal{P}}(z) + (Bz - Kx_0)\big) \subset N_{\mathcal{P}}(z) + \R_+(Bz - Kx_0).
\end{equation*}
The converse inclusion also holds true. Indeed, take $w = a + t(Bz-Kx_0)$ where $a\in N_{\mathcal{P}}(z)$ and $t\ge 0$. If $t>0$, then 
\begin{equation*}
    \dfrac{w}{t} = \dfrac{a}{t} + (Bz - Kx_0) \in N_{\mathcal{P}}(z) + (Bz-Kx_0). 
\end{equation*}
This leads to $w\in \cone \big(N_{\mathcal{P}}(z) + (Bz - Kx_0)\big)$. Otherwise, $t=0$ and \eqref{eq:absorp} implies that 
\begin{equation*}
    w = a = \big[a + (Kx_0-Bz)\big] + (Bz-Kx_0) \in N_{\mathcal{P}}(z) + (Bz-Kx_0), 
\end{equation*}
which also yields $w\in \cone \big(N_{\mathcal{P}}(z) + (Bz - Kx_0)\big)$. Therefore,
\begin{equation*}
    \cone \big(\partial h^* (z) - Kx_0\big) = N_{\mathcal{P}}(z) + \R_+(Bz - Kx_0) = \cone \{a_i|\, i\in I(z)\} + \R_+(Bz - Kx_0),
\end{equation*}
given that $\mathcal{P}=\big\{y\in \R^m|\, \langle a_i,y\rangle\le c_i,\, i=1,\ldots,p\big\}$ and $I(z):= \{i\in \{1,\ldots,p\}|\, \la a_i,z\rangle = c_i\rangle\}$. Condition \eqref{con:Rad} then becomes
\begin{equation*}
    \Ker \Phi \cap K^{-1}\Big(\cone \big\{a_i|\, i\in I(z)\big\} + \R_+\big(Bz - Kx_0\big)\Big) = \{0\},
\end{equation*}
as claimed. 

Next we show that under condition \eqref{eq:cont_qual}, the function $h$ is continuous over the whole space. Indeed, by \cite[Example~11.18]{Rockafellar98}, we have $\dom h = \dom \theta_{\mathcal{P},B}= (\mathcal{P}^{\infty} \cap \Ker B)^\circ$. Condition \eqref{eq:cont_qual} leads us to
\[
\dom h = (\mathcal{P}^{\infty} \cap \Ker B)^\circ= \R^n,
\]
which implies its continuity everywhere due to the convexity of $h$. By Remark~\ref{rem:calm-cpwl}, the solution
mapping \(S\) has the isolated calmness property at \(b_0\) for \(x_0\)
if and only if condition~\eqref{con:Rad} holds. The proof is complete.
\end{proof}

\begin{Remark}
    \rm  Condition~\eqref{eq:cont_qual} holds trivially when either  $\mathcal{P}$ is a {\em compact} polyhedral set or $B$ is an invertible matrix. In the recent paper,  \cite[Lemma~5.1]{SHN26} investigates the similar composite functions for a distinct purpose and use a weaker condition
    \[
    \mathcal{P}^{\infty} \cap \Ker B\cap \Ker K^*=\{0\},
    \]
which does not necessarily  imply the continuity of the function $h$.
    
\end{Remark}

Although the isolated calmness property of $S(\cdot)$ at $b_0$ for $x_0$ yields the uniqueness of $S(b_0)=\{x_0\}$, as noted in Remark~\ref{rem:iso-calm}, the converse implication does not hold in general. In fact, isolated calmness property can fail even when the solution mapping S is locally single-valued around the reference point. The following example, taken from \cite[Example~3.4]{NPV25}, demonstrates this phenomenon.

\begin{Example}[]\label{ex:NoSR}\rm
Consider the following $\ell_1/\ell_2$ optimization problem with two groups:
\begin{equation}\label{p:E12}
\min_{x\in \mathbb{R}^4} R(x) = \sqrt{x_1^2+x_2^2}+ \sqrt{x_3^2+x_4^2}, \quad \text{subject to} \quad \Phi x = b_0   
\end{equation}
with $\Phi=\begin{pmatrix} 1&0&0&-1\\0&1&0&1\\0&0&1&0\end{pmatrix}$,  $x_0=(0,1,0,0)^T$, and $b_0=(0,1,0)^T$.  Note that $$\Ker \Phi=\R(1,-1,0,1)^T \,\,\text{ and }\,\,\Im\Phi^*=\big\{(u_1,u_2,u_3,u_2-u_1)\in \R^4\big|\, (u_1,u_2,u_3)\in \R^3 \big\}.$$ Any feasible point $x$ takes the form $ x_0 + t(1,-1,0,1)^T$ for $t \in \mathbb{R}$.
Next, we claim that $\R_+(1,-1,0,1)^T\subset\Ker \Phi\cap T_{\partial R^*({\rm Im}\, \Phi^*)}(x_0)$. Indeed, fix $w:= (a,-a,0,a)^T$ for some $a>0$ and consider a sequence $t_k\dn 0$, we define the sequence $w_k:=(a,-a,c_k,d_k)^T$ with
\[
c_k:=\dfrac{a\sqrt{2t_ka-2t_k^2a^2}}{\|(t_ka,1-t_ka)\|} \quad\mbox{and} \quad d_k:=\dfrac{a(1-2t_ka)}{\|(t_ka,1-t_ka)\|}.
\]
Observe that $w_k \to w$ and $\|(c_k,d_k)\|=a\neq0$. Moreover,  we have
\[
v_k:=\nabla R(x_0+t_kw_k)= \left(\dfrac{t_ka}{\|(t_ka,1-t_ka)\|},\dfrac{1-t_ka}{\|(t_ka,1-t_ka)\|},\dfrac{\sqrt{2t_ka-2t_k^2a^2}}{\|(t_ka,1-t_ka)\|},  \dfrac{1-2t_ka}{\|(t_ka,1-t_ka)\|}\right)^T.
\]
It follows that $x_0+t_kw_k\in \partial R^*(v_k)$. Note also that  $v_k\in \Im \Phi^*$. Hence, $x_0+t_kw_k\in \partial R^*(\Im \Phi^*)$, which helps us to conclude that $w\in T_{\partial R^*({\rm Im}\,\, \Phi^*)}(x_0)$. By Theorem~\ref{thm1}, isolated calmness property of $S(\cdot)$ fails at $b_0$ for $x_0$.

Next, we show that $S(\cdot)$ is single-valued around $b_0$. Since the matrix $\Phi$ is full row rank, we have $
\operatorname{Im}\Phi=\mathbb{R}^3$. Let $b=(b_1,b_2,b_3)^T\in \operatorname{Im}\Phi$. Any $x=(x_1,x_2,x_3,x_4)^T \in \R^4$ satisfying $\Phi x=b$ yields 
$$
x_1-x_4=b_1,\, x_2+x_4=b_2,\,\text{ and }\, x_3=b_3.
$$
Hence every feasible solution of $P(b)$ can be written uniquely in the form
\[
x_b(t):=(b_1+t,\; b_2-t,\; b_3,\; t)^T,\qquad t\in\mathbb{R}.
\]
Therefore, solving $P(b)$ is equivalent to minimizing the univariate function
\begin{equation}\label{phi:12}
    \varphi_b(t):=\sqrt{(b_1+t)^2+(b_2-t)^2}+\sqrt{b_3^2+t^2},
\qquad t\in\mathbb{R}.
\end{equation}

Since the map $(x_1,x_2,x_3)\mapsto x_1+x_2$ is continuous and $0+1>0$, we may choose a neighborhood $U$ of $b_0 = (0,1,0)^T$ such that $b_1+b_2>0$ for every $b=(b_1,b_2,b_3)^T\in U$. Consequently, 
\begin{equation*}
    \frac{(b_1+b_2)^2}{\big[(b_1+t)^2+(b_2-t)^2\big]^{3/2}}>0 \,\text{ for each }\, b\in U.
\end{equation*}
We claim that $\varphi_b$ is strictly convex for every $b\in U$. Indeed, for such $b$, define
\[
g_b(t):=\sqrt{(b_1+t)^2+(b_2-t)^2},\quad h_b (t) := \sqrt{b_3^2 + t^2}.
\]
A direct computation gives
\begin{equation}\label{second deri}
g_b''(t)=\frac{(b_1+b_2)^2}{\big[(b_1+t)^2+(b_2-t)^2\big]^{3/2}}>0  \quad \forall\, t\in \R, 
\end{equation}
which yields the strict convexity of $g_b$ on $\R$. Since $g_b$ is strictly convex and $h_b$ is convex, the function $\varphi_b$ is also strictly convex on $\R$. Because $\varphi_b$ is continuous, coercive, and strictly convex, it admits a unique minimizer $t_b$. Consequently, $S(b)=\{x_b (t_b)\}$ for every $b\in U$, which implies that the solution mapping $S(\cdot)$ is locally single-valued around $b_0$. 
\end{Example} 
\begin{Remark}\label{Degree of Convergence}
{\rm In the above example, we have $S(b)=\{x_b(t)\}$ for $b$ locally around $b_0$. Combined with Proposition~\ref{converge}, every sequence of solutions $\{x_b(t)\}$ of $P(b)$ converges to $x_0$ as $b \to b_0$. However, Example~\ref{ex:NoSR} also shows that $S(\cdot)$ fails to possess the isolated calmness property at $b_0$ for $x_0$, i.e., the rate of convergence is less than $1$. It is therefore natural to investigate the actual rate at which $\{x_b\}$ converges to $x_0$. 

Consider the neighborhood $U$ of $b_0$ from Example~\ref{ex:NoSR}. By choosing a sequence $\{\delta_k\}$ converging to $0$, we have $b_k:=(\delta_k,1+\delta_k,\delta_k)^T$ belongs to $U$ for large enough $k\in \N$. Taking the derivative of $\varphi_{b_k}$,
\begin{equation}\label{eq:dphi}
    \varphi'_{b_k}(t)=\frac{2t-1}{\sqrt{(\delta_k+t)^2+(1+\delta_k-t)^2}}+\frac{t}{\sqrt{\delta_k^2+t^2}}.
\end{equation}
Let $t=0$ and $t=\frac{1}{2}$, it follows from \eqref{eq:dphi} that
\[
\varphi'_{b_k}(0)=\frac{-1}{\sqrt{\delta_k^2+(1+\delta_k)^2}}<0 \quad \mbox{and}\quad \varphi'_{b_k}\left(\dfrac{1}{2}\right)=\frac{1}{2\sqrt{\delta_k^2+\frac{1}{4}}}>0.
\]
Combining this and the fact that $\varphi'_{b_k}(t)=0$ admits a unique solution $t_k$ yields $t_k \in (0,1/2)$. Thus, there exists a subsequence $\{t_{k_m}\}_{m\in \N}$ of $\{t_k\}$ converging to some $t_0\in [0,1/2]$. 
%%%%%%%%%%%%%%
%Since t_0 may be 0 (which actually is), passing to the limits as m\to \infty in (3.52) may lead to indeterminate form 0/0 on the RHS
%%%%%%%%%%%%%%
% Hence, let $m\to \infty$ in \eqref{eq:dphi} and using $\varphi'_{b_{k_m}}(t_{k_m})=0$, we obtain
% \[
% \frac{2t_0-1}{\sqrt{t_0^{2}+(1-t_0)^{2}}}=-1.
% \]
% As $0\le t_0\le \frac{1}{2}$, the above equation is equivalent to $2t_0^2-2t_0=0$, i.e., $t_0=0$.   
Since $\varphi'_{b_{k}}(t_{k})=0$, we get from \eqref{eq:dphi} the equations
\begin{equation*}
    \dfrac{1-2t_{k}}{\sqrt{(\delta_{k}+t_{k})^2+(1+\delta_{k}-t_{k})^2}} = \dfrac{t_{k}}{\sqrt{\delta_{k}^2+t_{k}^2}}\qquad (k \text{ large}),
\end{equation*}
or equivalently,
\begin{equation}\label{eq:delta+t}
    (1-2t_{k})^2 \big(\delta_{k}^2 + t_{k}^2\big) = t^2_{k} \big[(\delta_{k}+t_{k})^2+(1+\delta_{k}-t_{k})^2\big] \qquad  (k\text{ large}).
\end{equation}
Taking the limits of both sides of \eqref{eq:delta+t} as $m\to \infty$, we arrive at $t_0=0$. Since $\{t_{k_m}\}$ was taken arbitrarily, it follows that $t_k\to 0$ as $k\to \infty$. Moreover, \eqref{eq:delta+t} also yields
\begin{equation}\label{eq:delta/t}
    (1-2t_{k})^2\frac{\delta_{k}^2}{t_{k}^2}=(\delta_{k}+t_{k})^2+(1+\delta_{k}-t_{k})^2-(1-2t_{k})^2\qquad (k \text{ large}).
\end{equation}
As $t_{k}\to 0$, the above equation helps us to derive that $\dfrac{\delta_k}{t_k}\to 0$ as $k\to \infty$.
Additionally, it is equivalent from \eqref{eq:delta/t} that
\begin{eqnarray}
    \begin{aligned}
        (1-2t_k)^2\delta_k^2&=t_k^2\big[(\delta_k+t_k)^2+(1+\delta_k-t_k)^2-(1-2t_k)^2\big]\\
        &=2t_k^2\big(\delta_k+\delta_k^2+t_k-t_k^2\big)\\
        &=2t_k^3\left[\frac{\delta_k}{t_k}(1+\delta_k)+1-t_k\right].
    \end{aligned}
\end{eqnarray}
Therefore,
\[
t_k=\sqrt[3]{\frac{(1-2t_k)^2\delta_k^2}{2\left[\dfrac{\delta_k}{t_k}(1+\delta_k)+1-t_k\right]}}\approx \mathcal{O}(\delta_k^{2/3}).
\]
Measuring the distance between the solutions $x_k$ of $P(b_k)$ and the original solution $x_0$ gives us that
\begin{eqnarray}
\begin{aligned}
    \|x_k-x_0\|
    &=\sqrt{(\delta_k+t_k)^2+(\delta_k-t_k)^2+\delta_k^2+t_k^2}\\
    &\approx \sqrt{\big[\mathcal{O} (\delta_k^{2/3})+\delta_k\big]^2+\big[\mathcal{O} (\delta_k^{2/3})-\delta_k\big]^2+\delta_k^2+\mathcal{O} (\delta_k^{2/3})^2}\approx \mathcal{O} (\delta_k^{2/3}).
\end{aligned}
\end{eqnarray}
This shows that the rate of convergence of the solutions $x_k$ of $P(b_k)$ to $x_0$ is less than $2/3$.}
\end{Remark}

\section{Conclusion}

This paper provides geometric characterizations of the uniqueness and isolated calmness properties for convex regularized linear inverse problems. The main results show that uniqueness can be described by a radial-cone condition involving pre-dual certificates, while isolated calmness is characterized by a tangent-cone condition. For convex piecewise linear-quadratic regularizers, these two properties coincide, yielding tractable verification procedures, including linear programming tests for analysis $\ell_1$-regularization. We provided a counterexample in a non-polyhedral setting showing that local single-valuedness alone is not sufficient for the isolated calmness property.

\bibliographystyle{abbrv}
\bibliography{bio}

\end{document}